\documentclass{amsart}[12pt]
\usepackage{amssymb,amsmath,latexsym,amscd,amsfonts}
\usepackage{graphics,mathtools}
\usepackage{epsfig}
\usepackage[usenames,  dvipsnames]{color}

\newtheorem{thm}{Theorem}
\newtheorem{lem}[thm]{Lemma}

\newtheorem{dfn}[thm]{Definition}

\def\hpic #1 #2 {\mbox{$\begin{array}[c]{l} \epsfig{file=#1,height=#2} \end{array}$}}
\def\vpic #1 #2 {\mbox{$\begin{array}[c]{l} \epsfig{file=#1,width=#2} \end{array}$}}
\def\ignore #1 {}

\def\R{\mbox{{$\mathbb{R}$}}}
\def\Z{\mbox{{$\mathbb{Z}$}}}
\def\N{\mbox{{$\mathbb{N}$}}}

\def\eps{\epsilon}

\def \aaron #1 {\marginpar{#1 -AA}}

\title{Dull Cutoff for Circulants}
\date{\today}
\begin{document}
\begin{abstract}
Families of symmetric simple random walks on Cayley graphs of Abelian groups with a bound on the number of 
generators are shown to never have sharp cutoff in the sense of [1], [3], or [5].  Here convergence to the stationary
distribution is measured in the total variation norm.  This is a situation of bounded degree and no expansion;
sharp cutoff (or the cutoff phenomenon) has been shown to occur in families such as random walks on a hypercube 
[1] in which the degree is unbounded as well as on a random regular graph where the degree is fixed, but there 
is expansion [4].  Our examples agree with Peres' conjecture in [3] relating sharp cutoff, spectral gap, and mixing time.  
\end{abstract}

\author[Abrams]{Aaron Abrams}
\address[Aaron Abrams]{Washington and Lee University}
\email{abramsa@wlu.edu}

\author[Babson]{Eric Babson}
\address[Eric Babson]{University of California, Davis}
\email{babson@math.ucdavis.edu}

\author[Landau]{Henry Landau}
\address[Henry Landau]{AT\&T Research}
\email{henry.j.landau@gmail.com}

\author[Landau]{Zeph Landau}
\address[Zeph Landau]{University of California, Berkeley}
\email{zeph.landau@gmail.com}

\author[Pommersheim]{Jamie Pommersheim}
\address[Jamie Pommersheim]{Reed College}
\email{jamie@reed.edu}

\maketitle
\section{Introduction}

In this work we consider a finite Abelian group $G$ equipped with a generating set $\{a_i \}_{1\leq i \leq r}$ of 
size $r$.  We are interested in analyzing an aspect of the random walk given by applying one of the elements 
$\{0,  \pm a_i \}$ with equal probability; we'll call this a {\it type $r$} walk.  Here is our main result:


\begin{thm}
No family of walks all of the same type has sharp cutoff. 
\end{thm}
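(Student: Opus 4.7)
The plan is to show that for every type $r$ walk, $t_{\mathrm{mix}}(1/4)\cdot\mathrm{gap} = O_r(1)$ uniformly, with the implied constant depending only on $r$. Combined with the elementary single-character lower bound $\|P^t-\pi\|_{TV}\geq \tfrac{1}{2}|\lambda_{\chi^*}|^t \geq \tfrac{1}{2}e^{-t\cdot\mathrm{gap}}$ applied to the character $\chi^*$ achieving the gap, this forces both $t_{\mathrm{mix}}(\epsilon)$ and $t_{\mathrm{mix}}(1-\epsilon)$ into an interval $[c_{\epsilon,r}/\mathrm{gap},\,C_{\epsilon,r}/\mathrm{gap}]$, so the cutoff window is $\Theta(t_{\mathrm{mix}})$ rather than $o(t_{\mathrm{mix}})$ and no family of type $r$ walks can have sharp cut off.

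The natural tool is Fourier analysis on $G$. Each character $\chi\in\hat G$ is an eigenfunction with real eigenvalue
$$\lambda_\chi = \frac{1}{2r+1}\Bigl(1 + 2\sum_{i=1}^r \cos 2\pi\theta_i(\chi)\Bigr), \qquad \chi(a_i) = e^{2\pi i\theta_i(\chi)},$$
and the map $\chi\mapsto(\theta_1(\chi),\ldots,\theta_r(\chi))$ embeds $\hat G$ as a finite subgroup $K\subset\T^r$ of size $|G|$ (assuming the $a_i$ generate $G$, the only relevant case). The expansion $1-\cos 2\pi\theta \asymp \theta^2$ on $|\theta|\leq 1/\pi$ shows that for $\chi$ near the identity $1-\lambda_\chi$ is comparable, up to $r$-dependent constants, to $\|\theta(\chi)\|_2^2$, so eigenvalues near $1$ correspond to points of $K$ near the origin of $\T^r$. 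The upper bound on mixing I would obtain from the Plancherel--Cauchy--Schwarz estimate
$$4\|P^t-\pi\|_{TV}^2 \leq \sum_{\chi\neq 1}|\lambda_\chi|^{2t} \leq \int_{\mathrm{gap}}^\infty 2t\,e^{-2t\eta}\,N(\eta)\,d\eta,$$
where $N(\eta) = \#\{\chi : 1-|\lambda_\chi| \leq \eta\}$, provided I have a good uniform count of eigenvalues close to $1$.

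The main obstacle is exactly this uniform counting bound: $N(\eta) \leq C_r(\eta/\mathrm{gap})^{r/2}$ with $C_r$ depending only on $r$. I would prove it by lifting $K\subset\T^r$ to a lattice $\Lambda\supset\Z^r$ in $\R^r$ of covolume $|G|^{-1}$; then the gap is comparable to the squared length $s^2$ of the shortest nonzero vector of $\Lambda$ not in $\Z^r$. The hypothesis $|G|\geq\pi^r$ enters here by a Dirichlet pigeonhole on boxes of side $|G|^{-1/r}$, which produces such a short vector with $s \leq c_r|G|^{-1/r} < 1$ and hence guarantees $\mathrm{gap} = O_r(|G|^{-2/r})$ in the regime where the quadratic approximation for $1-\cos$ is valid. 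Minkowski's second theorem on successive minima $s = s_1 \leq \cdots \leq s_r$ then bounds $|\Lambda \cap B_\rho| \leq C_r \prod_{i=1}^r (1+\rho/s_i) \leq C_r(1+\rho/s)^r$, and for $\rho<1/2$ this coincides with $|K\cap B_\rho^{\T^r}|$. Translating gives the desired bound on $N$; plugging in yields $\sum_{\chi\neq 1}|\lambda_\chi|^{2t} = O_r((t\cdot\mathrm{gap})^{-r/2})$, which is below $1/4$ once $t\cdot\mathrm{gap}$ exceeds an $r$-dependent constant. The delicate point is extracting a counting constant independent of the ``shape'' of $\Lambda$; since we only need an upper bound on $N$, replacing every $s_i$ by $s_1$ merely inflates the count, leaving a constant depending solely on $r$ and not on $|G|$ or the anisotropy of $\Lambda$.
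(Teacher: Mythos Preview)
Your approach is correct and shares the paper's overall architecture: the lower bound $d_A(t)\geq|\lambda_m|^t$ from the extremal character, the $L^2$/Plancherel upper bound $d_A^2(t)\leq\sum_{\chi\neq 1}\lambda_\chi^{2t}$, the passage to a lattice in $\R^r$ via $\chi\mapsto\theta(\chi)$, and the Dirichlet/Minkowski pigeonhole (using $|G|\geq\pi^r$) to locate a short nonzero lattice vector and hence an eigenvalue close to~$1$. The substantive divergence is in how the lattice sum is controlled. You count lattice points in balls via successive minima, using the classical bound $|\Lambda\cap B_\rho|\leq C_r\prod_i(1+\rho/s_i)\leq C_r(1+\rho/s_1)^r$, and then integrate over level sets of $1-|\lambda_\chi|$ to bound the eigenvalue sum. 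The paper instead bounds the theta-type sum $\sum_{\sigma\in\Lambda}e^{-a|\sigma|_2^2}$ directly: it introduces \emph{tight} lattices (those maximizing the number of shortest vectors after rescaling a basis), notes that they form a compact family, decomposes any tight lattice into finitely many \emph{acute cones}, dominates the exponential sum over each cone by a geometric series in $e^{-a\mu^2}$, and takes $\kappa_r$ to be the worst-case number of cones over the compact family. Your route is standard geometry of numbers and easy to cite; the paper's acute-cone/compactness argument is self-contained and delivers the theta-sum bound in one stroke without a level-set integral. Both produce constants depending only on $r$ and not on the shape of $\Lambda$, which is exactly the uniformity needed. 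One minor remark: the ball-count inequality you invoke is a consequence of the existence of a reduced basis rather than of Minkowski's second theorem itself, though the two are closely linked; and you should note (as the paper implicitly uses) that the holding probability $1/(2r+1)$ forces $\lambda_\chi\geq (1-2r)/(2r+1)$, so eigenvalues near $-1$ never occur and $N(\eta)$ really is governed by lattice points near the origin.
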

This result is known in the case $r=1$, i.e., for cyclic groups with only one step size.
See Section 8 of [5] for more on the convergence rate of type $r$ walks.  

Sharp cutoff is defined as follows.
If $A$ is an irreducible symmetric Markov matrix with unique stationary distribution ${\bf v}_0$ (so that 
$A{\bf v}_0={\bf v}_0$ and $|{\bf v}_0|_1=1$) and ${\bf x_0}=(1,0,\ldots,0)$, we write \[d_A(t)=|A^t{\bf x_0}-{\bf v}_0|_1,\]
for the distance to the stationary distribution at time $t$ and 
\[t_A(d)=\max\{t|d_A(t)\geq d\},\] for the time it takes to get within distance $d$ of the stationary distribution.  

\begin{dfn}\label{def}
A family $\{A_i\}$ of irreducible symmetric Markov matrices has sharp cutoff if 
\[\lim_{n\rightarrow\infty}{t_{A_n}(\epsilon)\over t_{A_n}(1-\epsilon)}=1\] for every $\epsilon\in(0,{1\over 2})$. 
\end{dfn}
See also Definition 3.3 in [5].

Our reasoning about the notion of sharp cutoff is inspired by the following two extreme scenarios.
Consider on one hand a family of Markov matrices $\{A_n\}$ with $n$ eigenvalues $1, 1- \eps, 0, 0, \dots 0$, 
and on the other hand a family $\{B_n\}$ with $n$ eigenvalues $1, 1-\eps, 1-\eps, \dots , 1-\eps$.  For fixed $n$, 
beginning with a vector ${\bf x_0}=(1,0,\ldots,0)$ we are interested in how quickly the vectors $A_n^i{\bf x_0}$ and 
$B_n^i{\bf x_0}$ approach the stationary distribution $(\frac{1}{n}, \frac{1}{n}, \dots, \frac{1}{n})$ when measured 
in $\ell^1$ norm.  For the moment, let us imagine that we can take $\sqrt{n}$ times the $\ell ^2$ norm as a proxy for 
the $\ell ^1$ norm; in general this substitution is not rigorous but it does hold quite tightly in many cases 
(see Chapter 3 of \cite{d2}), and doing it here allows us to change basis and analyze our scenarios in the 
diagonal basis of the Markov matrix.
Denote the image of ${\bf x_0}$ in the diagonal basis by ${\bf w}$; the image of the stationary distribution 
$(\frac{1}{n}, \frac{1}{n}, \dots \frac{1}{n})$ is the first eigenvector $(\frac{1}{\sqrt{n}}, 0, 0, \dots , 0)$.  

Then we have for $A_n$,  
\[a^i\coloneqq A_n^i({\bf w})= (w_1, (1-\eps)^iw_2, 0, \dots 0)\] 
for $i\geq 1$.  Under the assumption that the $|w_i|$ are each on the order of  $\frac{1}{\sqrt{n}}$, the $\ell^2$ 
distance to the stationary distribution is about $(1-\eps)^i w_2$, and our proxy measure is already within a 
constant of $(\frac{1}{\sqrt{n}}, 0, \dots, 0)$ at time $i=1$. Further iteration moves closer to stationary at basically 
the constant multiplicative rate $(1-\eps)$ per time step.  This does not display sharp cutoff, since $A_n$ moves 
quickly to within a constant distance of the stationary.  (See Definition \ref{def}.)

In contrast, for $B_n$, we have that 
\[b^i \coloneqq B_n^i({\bf w})= (w_1, (1-\eps)^iw_2, (1-\eps)^iw_3, \dots (1-\eps)^iw_n) \] 
has $\ell ^2$ distance $(1-\eps)^i \sqrt{\sum_{i=2}^n |w_i|^2 }\approx (1-\eps)^i$ which only gets within a constant
of stationary (in our proxy measure) at a time $i$ for which $(1-\eps)^i \sqrt{n}$ is constant, i.e., for 
$i=O(\log n)/\log \frac{1}{1-\eps}$.  Once $i$ exceeds this time, $b^i$ moves towards the stationary distribution at the 
same rate per step as $a^i$.   Thus $B_n$ spends a long time ($O(\log n)$) getting close to the stationary
relative to the time spent improving that closeness, meaning that $B_n$ does exhibit sharp cutoff.  

These examples illustrate the perspective that sharp cutoff is a criterion that captures those scenarios where the set of 
eigenvalues of the process drop off ``slowly enough.''  This perspective is made rigorous in our context 
via Lemma \ref{l:3}, which uses a standard argument to relate sharp cutoff to the decay of eigenvalues.  As we will 
then show, the eigenvalues for the type $r$ processes we consider here drop off too quickly for sharp cutoff to occur.

Our argument rests on one key idea which we now describe.
The fact that our groups $G$ are Abelian allows us to describe the eigenvalues of the process explicitly via the 
one dimensional representations of $G$.   Rather than analyzing directly the distribution of the sizes of these 
eigenvalues, we observe that these eigenvalues correspond in a nice way to certain vectors in an 
$r$ dimensional lattice.  We show that the sizes of the eigenvalues fall off more quickly than the set 
$e^{-c|x|^2}$ for $x$ ranging over the lattice, which despite being an infinite set is more readily summable.  
The lengths of lattice vectors shrink quickly enough to rule out sharp cutoff via this comparison.

Most of the work to prove Theorem 1 already shows up in the cyclic case, which we prove in Section \ref{s:cyclic}.
The general Abelian case, which is notationally more complicated, is completed in Section \ref{s:abelian}.

\section{Relating rate of convergence to eigenvalues}

We start with a general lemma relating the distance to the stationary distribution at time $t$ to the eigenvalues of 
the Markov matrix $A$ corresponding to a random walk on a Cayley graph.

We write
\[\{\lambda_k\}\subseteq(-1,1] \mbox{ for the eigenvalues of $A$ with } \lambda_0=1 \mbox{ and }
|\lambda_m|=\max_{k\not=0}|\lambda_k|<1.\]  

\begin{lem} \label{l:3} Given a transition matrix $A$ for a random walk on a Cayley graph for 
the Abelian group $G$, we have
\begin{equation} \label{e:1}
\lambda_m^{2t}\leq d_A^2(t)\leq \sum_{k\not=0}\lambda_k^{2t}.
\end{equation}
\end{lem}  

\begin{proof}
For the left inequality note that $A=A^*$ is self adjoint and the stationary distribution is ${\bf v}_0={1\over n}{\bf 1}$. 
Write ${\bf v}_m$ for the eigenvector with $A{\bf v}_m=\lambda_m{\bf v}_m$ and $|{\bf v}_m|_1=1$.  Since $A$ is a 
transition matrix for a random walk on a Cayley graph for $G$ it commutes with rotation (action by $G$).  Thus 
${\bf v}_m$ is an eigenvector for rotation and hence all entries of ${\bf v}_m$ have the same norm, which by the 
normalization is $|{\bf v}_m|_\infty={1\over n}$.  Since 
$d_A(t)=\max_{\bf v}{\langle A^t{\bf x}_0-{\bf v}_0,{\bf  v}\rangle\over |{\bf v}|_\infty}$ this gives 
$d_A(t)\geq n|\langle A^t{\bf x}_0-{\bf v}_0,{\bf  v}_m\rangle|=|\lambda_m|^t$.  

For the right inequality if ${\bf v}_k$ is the eigenvector of $A$ with eigenvalue $\lambda_k$ and every entry having 
norm ${1\over n}$ then $|\langle{\bf x}_0,{\bf v}_k\rangle|={1\over n}$ so that 
$d_A^2(t)=|A^t{\bf x}_0-{\bf v}_0|_1^2\leq n|A^t{\bf x}_0-{\bf v}_0|_2^2=\sum_{k\not=0}\lambda_k^{2t}$.
\end{proof}

\section{The cyclic case} \label{s:cyclic}
With Lemma \ref{l:3} in hand, we first prove Theorem 1 in the case where every
group is cyclic.  In this case the Markov matrices are circulants. Focusing on one walk from
the family, we'll denote by  $\{\pm a_i\}\ (1\le i \le r)$ the $r$ possible steps of the walk  on $\Z\slash n\Z$ 
and by $A$ the corresponding symmetric Markov transition matrix.  The Fourier transform yields  an explicit 
form of the eigenvalues of $A$ for $k=1, \dots, n$:
\[\lambda_k=\sum_{i=1}^r{2\over 2r+1}\cos\left(2\pi{ka_i\over n}\right)+{1\over 2r+1}.\]  

Our strategy is to bound the $\lambda_k$ and use these bounds in conjunction with Lemma \ref{l:3} to get 
upper and lower bounds on $d_A^2(t)$ which will be tight enough to show that sharp cutoff 
does not occur.  Specifically, we will associate to each $\lambda_k$ an 
$r$ dimensional vector $\sigma_k$ for which $e^{-c_1|\sigma_k|^2} \leq \lambda_k \leq e^{-c_2|\sigma_k|^2}$.  
Modulo a minor complication, the
$\sigma_k$ will all lie in an $r$ dimensional lattice, with $\sigma_m$ being a minimal length vector in that lattice.  
This will allow us to upper bound the right hand side of \eqref{e:1} by the sum over the whole lattice of 
$e^{-c_2|\sigma_k|^2}$.  Despite the inclusion of many extra terms, this bound will be tight enough for our needs.

The minor complication is that if $\lambda_k<0$ we need to analyze a slightly different lattice which we do via
a slightly different vector $\tilde{\sigma}_k$. However the core of the argument remains the same.

We begin by defining for each $k=1,\dots, n-1$ the vector
$$\sigma_k = \bigl(\langle \frac{ka_1}{n} \rangle, \dots, \langle \frac{ka_r}{n} \rangle\bigr) \in \R^r$$
where we use $\langle x\rangle\in(-{1\over 2},{1\over 2}]$ for the 
smallest translate of $x$ by an integer.  The vector $\sigma_k$ lives in the lattice
$$\Lambda= \Z^r + \Z\cdot(\frac{a_1}{n} , \dots, \frac{a_r}{n} \bigr) .$$
Note that if all the coordinates of $\sigma_k$ are small, then all the cosines in the expression for $\lambda_k$ are 
close to $1$, so $\lambda_k$ is close to 1.  For $\lambda_k$ to be close to $-1$, we are forced to have the coordinates 
of $\sigma_k$ close to $\pm \frac{1}{2}$.   For this reason, we also consider the vector
$$\tilde{\sigma}_k = \bigl(\langle \frac{ka_1}{n} +\frac12 \rangle, \dots, \langle \frac{ka_r}{n}+\frac12 \rangle\bigr) \in \R^r,$$
which lives in the lattice
$$\tilde{\Lambda}= \Z^r + \Z\cdot\bigl(\frac{a_1}{n} , \dots, \frac{a_r}{n} \bigr) + \Z\cdot\bigl(\frac12, \dots, \frac12\bigr) .$$
Let $\mu$ and $\tilde{\mu}$ denote the lengths of the shortest nonzero vectors of $\Lambda$ and $\tilde{\Lambda}$
respectively.  Since $\sigma\in\tilde\Lambda$ implies $2\sigma\in\Lambda$, we have
$$2\tilde\mu\geq \mu.$$

The following lemma gives a lower bound on $d(t)$ by bounding $|\lambda_m|$ in terms of the length 
$\mu$ of the shortest vector in $\Lambda$.  Here we assume $n$ is sufficiently large, which we may do since 
if any subfamily of walks has bounded size then the family cannot have sharp cutoff.

\begin{lem}  Let $\beta=\frac{8\pi^2}{(2r+1)}$.
Then if $n$ is sufficiently large, 
 $$|\lambda_m| \geq e^{-\beta\mu^2}.$$
\end{lem}  

\begin{proof}
First note that $\hbox{Vol}[(\R^r\slash \Lambda)]={1\over n}$, which approaches $0$.  It follows that for large 
enough $n$, the shortest nonzero vector in $\Lambda$ has length less than $1$, and is therefore equal to 
$\sigma_k$ for some $k=1,\dots,n-1$. For this shortest $\sigma_k$, we may also assume that the all the 
coordinates of $\sigma_k$ have absolute value less than $\frac1{2\pi}$.  We will now show 
$$|\lambda_k| \geq e^{-\beta|\sigma_k|^2},$$
which will establish the lemma.

For any $x\in[-1,1]$, we have $\cos(x)\geq e^{-x^2}.$  Thus
$$\lambda_k \geq \frac1{2r+1}\biggl( \sum_i 2e^{-(2\pi\langle\frac{ka_i}{n}\rangle)^2}  + 1 \biggr)$$
Now consider the right hand side as an average of $2r+1$ values of the function $e^{-x}$.  The convexity of 
this function yields
$$\lambda_k \geq e^{-\frac{2}{2r+1}\sum_i(2\pi\langle\frac{ka_i}{n}\rangle)^2} = e^{-\beta|\sigma_k|^2},$$
as desired.
\end{proof}

Next we establish an upper bound on $d(t)$ by bounding the sum of $\lambda_k^{2t}$.  For each $k$, 
we bound $|\lambda_k|$ by an exponential of either $|\sigma_k|$ or $|\tilde\sigma_k|$. We then replace the sum 
of these exponentials with a sum over the entire lattice $\tilde\Lambda$.

\begin{lem}  
Let $\alpha=\frac{8}{(2r+1)^2}$.  Then
 \begin{equation} \label{e:4} 
 \sum_{k\not=0}\lambda_k^{2t}\leq 2\sum_{\sigma\in\tilde\Lambda\setminus\{0\}} e^{-\alpha|\sigma|^2\cdot 2t}.
 \end{equation}
\end{lem}

\begin{proof}
Fix $k\in\{1,\dots,n-1\}$.  First assume $\lambda_k\geq 0$.
As in the previous proof, we will replace cosines with exponentials, this time using the inequality 
$\cos(x)\leq e^{-{1\over 2\pi^2}x^2}$, which is valid for all $x\in[-{3\over 2}\pi,{3\over 2}\pi]$.  This gives
$$\lambda_k \leq \frac1{2r+1}\biggl( \sum_i 2e^{-2\langle\frac{ka_i}{n}\rangle^2}  + 1 \biggr)$$
We now consider the right hand side as an average value of the function $e^{-x^2}$ at $2r+1$ values of $x$, each 
of which lies in the interval $[0,{1\over\sqrt{2}}]$.  Since  $e^{-x^2}$ is concave down on this interval, we obtain
$$\lambda_k \leq e^{-\frac{8}{(2r+1)^2} [\sum |\langle\frac{ka_i}{n}\rangle  |  ]^2} = e^{-\alpha |\sigma_k |_1^2},$$
and where $|v|_1$ denotes the 1-norm of $v$.  Since $|v|_1\geq |v|$, we finally arrive at
$$\lambda_k \leq e^{-\alpha |\sigma_k |^2}.$$

Now consider the case in which $\lambda_k<0$.  In this case, $|\lambda_k|=-\lambda_k$ is the sum of 
negative cosines, so using $-\cos(2\pi x) = \cos(2\pi(x+\frac12))$, an argument similar to the first case shows that
$$|\lambda_k| \leq e^{-\alpha |\tilde\sigma_k |^2}.$$

Hence in both case we see that $|\lambda_k|$ is bounded above by $e^{-\alpha |\sigma |^2}$, with 
$\sigma\in\tilde\Lambda$.  To complete the proof of the lemma, we must show that no such $\sigma$ 
appears for more than two different values of $k$. To see this, note if $\sigma_k=\sigma_{k'}$ for some 
$k,k'\in\{1,\dots,n-1\}$, then $ka_i \equiv k'a_i \mod n$ for all $i$.  Since the $a_i$ generate $\Z_n$, 
this forces $k=k'$.  Similarly, $\tilde\sigma_k=\tilde\sigma_{k'}$ implies $k=k'$.  Finally if $\sigma_k=\tilde\sigma_{k'}$, 
then we get either $k=k'$ or $k=k'\pm \frac{n}{2}$, with $n$ even.  In any event, no more than two distinct choices of 
$k$ lead to the same lattice element $\sigma$. 
\end{proof}

%
%
%
%

The next two lemmas will further bound the right hand side of \eqref{e:4} in terms of $r$ and $\Lambda$.
This amounts to showing that the number of lattice points of length at most $d$ is bounded above by a polynomial 
in $d$.  Given two discrete (infinite) multisets $S, T \subset \R_{\ge 0}$, we say $S$ \emph{dominates} $T$ if there 
are orderings of $S= \{s_i: i\in \Z ^+\}$ and $T=\{t_i: i \in \Z ^+\}$ such that $s_i\geq t_i$ for all $i$.  Note that 
dominance induces a partial order on such multisets.  

For a lattice $\Lambda$, let $|\Lambda|$ denote the multiset of norms of vectors of $\Lambda$.  So $|\Lambda|$
is discrete and contained in $\R_{\ge 0}$.

%
%
%

\begin{lem}  \label{l:dominate}Let $r$ be a fixed integer.  Let $T$ be the multiset consisting of $\{0 \}$ 
and $3^r(i+1)^r$ copies of $i\in \N= \{1,2, \dots \}$.  Then for every full rank lattice 
$\Lambda$ in $\R^r$ with minimal nonzero vector of length $1$, the set $|\Lambda|$ dominates the 
multiset $T$. \end{lem}
\begin{proof}
Observe that for $d\ge 1$ the number $n(d)$ of lattice points of $\Lambda$ of norm no greater than 
$d$ is bounded above by $(3d)^r$.  This is because the shortest vector of $\Lambda$ 
has length 1, so radius $1/2$ balls around points of $\Lambda$ are disjoint, so
$n(d) \cdot B(1/2) \le B(d+1/2)$ where $B(t)$ is the volume of a ball in $\R^r$ of radius $t$.
This gives $n(d)\le B(d+1/2) / B(1/2) = 2^r(d+1/2)^r \le 3^r d^r$.

It follows that  no more than $3^r (i+1)^r $ lattice points are between lengths $i$ and $i+1$ and 
thus $|\Lambda |$ dominates the multiset $T$. 
\end{proof}

\begin{lem}
Fix $r$ and $\Lambda$ a full rank lattice in $\R^r$, and let $\mu=\min_{{\bf 0}\not=\sigma\in\Lambda}\{|\sigma|_2\}$. 
Then \[\sum_{\sigma\in\Lambda}e^{-a|\sigma|_2^2}\leq 1+3^rr! (\frac{1}{(1-(e^{-a\mu^2}))^r} -1)\]
\end{lem}

\begin{proof}
By  Lemma  \ref{l:dominate} we have:
\[ \sum_{\sigma\in\Lambda}e^{-a|\sigma|_2^2}\leq 1 + \sum_{t\in T} e^{-a(\mu t)^2} \leq 1 + \sum_{i\in \N}3^r (i+1)^r (e^{-a\mu^2})^{i^2}. \]  
This last sum can be bounded as
\[\sum_{i\in \N}3^r (i+1)^r (e^{-a\mu^2})^{i^2} \leq \sum_{j\in \N} 3^r(\sqrt{j} +1)^r(e^{-a\mu^2})^{j}  \leq 3^r r!\sum_{j\in \N}  \binom{j+r}{r}  (e^{-a\mu^2})^{j} \]
where we've substituted $i^2=j$ for the first inequality and $(\sqrt{j} +1)^r \leq r! \binom{j+r}{r}$ for the second.  
Finally using the well known $\sum_{i=1}^{\infty} \binom{i+r}{i} x^i=\frac{1}{(1-x)^r} -1$, we have
\[ 3^r r!\sum_{j\in \N}  \binom{j+r}{r}  (e^{-a\mu^2})^{j} =3^rr! (\frac{1}{(1-(e^{-a\mu^2}))^r} -1).\]
\end{proof}

%
We now prove Theorem 1 (in the cyclic case).
Combining Lemma 5 with Lemma 7 yields the upper bound
\begin{equation*}
\sum_i \lambda_i ^{2t} \leq 2\cdot 3^rr! (\frac{1}{(1-e^{-\alpha\tilde\mu^2\cdot 2t})^r} -1) 
\end{equation*}  
Meanwhile, Lemma 4 together with $2\tilde\mu\geq\mu$ gives
$$e^{-\alpha\tilde\mu^2\cdot 2t} \leq |\lambda_m|^{\frac{\alpha}{2\beta}t},$$
hence
\begin{equation} \label{e:2}
\sum_i \lambda_i ^{2t} \leq  2\cdot 3^rr! (\frac{1}{(1-|\lambda_m|^{\frac{\alpha}{2\beta}t})^r} -1)
\end{equation}

By the first inequality of Lemma \ref{l:3} we have $t_A(\eps) \geq t_0$ where $t_0$ is the solution to 
$|\lambda_m^t| = \eps$.  It follows that 
\[ t_A(\eps) \geq \log _{|\lambda_m|} (\eps).\]

By the second inequality of Lemma 3 and equation \eqref{e:2}, we have $t_A(1-\eps) \leq t_1$ with $t_1$ 
the solution to $  2\cdot 3^rr! (\frac{1}{(1-|\lambda_m|^{\frac{\alpha}{2\beta}t})^r} -1) =(1-\eps)^2$.  Solving this yields
\begin{equation} \label{e:tup}  
t_A(1-\eps) \leq \frac{2\beta}{\alpha} \log_{|\lambda_m|} C(\eps), 
\end{equation}
with $C(\eps)= ( 1- (\frac{(1-\eps)^2}{ 2\cdot 3^rr!} +1)^{-1/r}) $.

Together we have for each walk in the family,
\[ \frac{ t_A(\eps)}{ t_A(1-\eps)} \geq \frac{\alpha}{2\beta} \log _{|\lambda_m|} (\eps)/ \log_{|\lambda_m|}C(\eps) = \frac{1}{2\pi^2(2r+1)} \log _{C(\eps)} \eps. \]
Notice that as $\eps \rightarrow 0$ the $C(\eps) \rightarrow ( 1- (\frac{1}{ 2\cdot 3^rr!} +1)^{-1/r}) $ which is less than $1$ 
and bounded away from zero, so there will be a choice of $\eps$ that will make the right hand side greater than $1$ 
independent of $A$.  This completes the proof in the cyclic case.

\section{The general case} \label{s:abelian}

To prove the main theorem for arbitrary Abelian groups, we closely follow the proof for cyclic groups given above.

We consider an arbitrary finite Abelian group $G$, which we express as the product of $s$ cyclic groups of orders 
$n_1, \dots, n_s$.  Let $n=|G|=n_1\cdots n_s$.  We suppose that we have $r$ generators $a_1, \dots, a_r$ of $G$, with 
$$a_j = (a_{j1}, \dots a_{js}),$$
with each $a_{jh}\in\Z_{n_h}$, $h=1,\dots,s$.   We will assume that we have chosen a product decomposition in 
which $s\leq r$, which is always possible for an Abelian group generated by $r$ elements.

It will be convenient to have 
$$\eta_{jh} = \frac{a_{jh}}{n_h} \in [0,1),$$
and define $\eta_j=(\eta_{j1}, \dots \eta_{js})$.

The eigenvalues of our Markov process are indexed by tuples $k=(k_1, \dots, k_s)$ with $k_h\in\Z_{n_h}$ and are given by
$$\lambda_k = \frac1{2r+1} \biggl(  2\sum_{j=1}^r \cos (2\pi k \cdot \eta_j) + 1  \biggr).$$

As before we identify the $r$-tuple appearing in the arguments of the cosines above, letting
$$\sigma_k = \bigl(\langle k\cdot \eta_1 \rangle, \dots, \langle k\cdot \eta_r \rangle\bigr) \in \R^r.$$
The vector $\sigma_k$ lives in the lattice
$$\Lambda= \Z^r + \sum_{h=1}^s \Z \theta_h,$$
where
$$\theta_h =(\eta_{1h}, \dots, \eta_{rh}).$$
(The $\theta$ matrix is the transpose of the $\eta$ matrix.)

As before, we also introduce
$$\tilde{\sigma}_k = \bigl(\langle k\cdot \eta_1 +\frac12\rangle, \dots, \langle k\cdot \eta_r +\frac12 \rangle\bigr) \in \R^r,$$
which lives in the lattice
$$\tilde{\Lambda}= \Z^r  + \sum_{h=1}^s \Z \theta_h +  \Z\cdot\bigl(\frac12, \dots, \frac12\bigr).$$

As before, we let $\lambda_m$ be the eigenvalue with largest absolute value, $m\neq 0$, and we have
$$\lambda_m^{2t} \leq d^2(t) \leq \sum_{k\neq 0} \lambda_k^{2t}.$$

With this setup, the rest of the argument is similar to the cyclic case.  The conclusion of Lemma 4 holds exactly 
as before, while Lemma 5 must be modified as follows:

\begin{lem}  Let $\alpha=\frac{8}{(2r+1)^2}$.
Then
 $$\sum_{k\not=0}\lambda_k^{2t}\leq 2^s\sum_{\sigma\in\tilde\Lambda\setminus\{0\}} e^{-\alpha|\sigma|^2\cdot 2t}.$$
\end{lem}  
\begin{proof}
As in the  proof of Lemma 5, we see that if $\lambda_k>0$, then
$$\lambda_k \leq e^{-\alpha |\sigma_k |^2}.$$
and if $\lambda_k<0$, then 
$$|\lambda_k| \leq e^{-\alpha |\tilde\sigma_k |^2}.$$

So in either case $|\lambda_k|$ is bounded above by $e^{-\alpha |\sigma |^2}$, with $\sigma\in\tilde\Lambda$.  
To complete the proof of the lemma, we must show that no such $\sigma$ appears for more than $2^s$ different 
values of $k$. For this purpose, assume that $\sigma_k=\sigma_{k'}$.  This implies that for $j=1,\dots,r$,
$$\langle k\cdot \eta_j \rangle = \langle k'\cdot \eta_j \rangle$$
Equivalently, if we consider the $r\times s$ matrix $A=(a_{jh})$, and let $x$ be the length $s$ column vector 
with $x_h=\frac{k_h-k'_h}{n_h}$, then 
$$Ax\in \Z^r.$$
Now we use the fact the $a_j$ generate $G$.   This implies that there exists an integer vector $c=(c_1,\dots,c_r)$ 
such that $\sum c_j a_j = (1,0, \dots, 0)$ in $G$, i.e.,
$$cA = (1+\gamma_1 n_1, \gamma_2 n_2, \dots, \gamma_s n_s),$$
where the $\gamma_h$ are integers.  
It follows that 
$$cAx = x_1+\sum_{h=1}^s \gamma_h (k_h-k'_h),$$
and hence $x_1\in\Z$.  Similarly, all $x_h\in Z$, and we get $k_h \equiv k'_h \mod n_h$, i.e. $k=k'$.
 
We have now established that $\sigma_k=\sigma_{k'}$ implies $k=k'$.  A similar argument shows that 
$\tilde\sigma_k=\tilde\sigma_{k'}$ implies $k=k'$.  Finally if $\sigma_k=\tilde\sigma_{k'}$, then for each $h$, we get 
either $k_h=k'_h$ or $k_h=k'_h\pm \frac{n_h}{2}$, with $n_h$ even. It follows that no more than $2^s$ distinct 
choices of $k$ lead to the same lattice element $\sigma$. 
\end{proof}

Finally, because $r$ is constant and $s\leq r$, the presence of the factor $2^s$ in the above lemma does not effect 
the rest of the proof given in the cyclic case, which goes through without further modification.

%
%
%
%

\section{Remarks}
Note that the same bound as \eqref{e:tup} gives $t_A({1\over 2})\leq {c\over -\ln|\lambda_m|}$ for some fixed constant $c$ so that $(1-|\lambda_m|)t_A({1\over 2})$ is uniformly bounded so this class of examples agrees with Peres' conjecture in [3].  

The authors thank Persi Diaconis for suggesting this problem and Julie Landau for her court coverage.

\bibliographystyle{amsplain}

\end{document}